\newtheorem{theorem}{Theorem}
\newtheorem{lemma}[theorem]{Lemma}
\newtheorem{claim}[theorem]{Claim}
\newtheorem{remark}[theorem]{Remark}
\newcommand{\R}{\mathbb{R}}
\newcommand{\I}{\textrm{I}}
\newcommand{\cir}{\mathbb{S}^1}
\newcommand{\Ss}{\mathbb{S}}
\newcommand{\rangeof}[1]{\underline{\smash{#1}}}
\newcommand{\curvemap}{\gamma}
\newcommand{\curve}{\underline{\smash{\gamma}}}
\newcommand{\pathimap}[1]{\lambda_i}
\newcommand{\pathi}[1]{\underline{\smash{\lambda_{#1}}}}
\newcommand{\mb}{\mathbb}
\newcommand{\eps}{\varepsilon}
\newcommand{\abs}[1]{\lvert #1 \rvert}
\newcommand{\email}[1]{\texttt{#1}}
\renewcommand{\phi}{\varphi}
\renewcommand{\ell}{l}
\newcommand{\x}{\text{x}}
\newcommand{\y}{\text{y}}
\newcommand{\z}{\text{z}}
\title{Shadows of a Closed Curve}
\begin{document}


\begin{center}
{\Large \noindent
Shadows of a Closed Curve}

\bigskip

\begin{spacing}{1.5}
{\large \noindent
Michael~Gene~Dobbins\textsuperscript{1} \quad 
Heuna~Kim\textsuperscript{2} \quad 
Luis~Montejano\textsuperscript{3} \quad 
Edgardo~Roldán-Pensado\textsuperscript{4}
}
\end{spacing}

\smallskip

\begin{minipage}{0.9\textwidth}
\raggedright \footnotesize \singlespacing

\noindent
\llap{\textsuperscript{1}}Department of Mathematical Sciences, Binghamton University (SUNY), Binghamton, \newline New York, USA. 
\email{mdobbins@binghamton.edu} 
\newline
\llap{\textsuperscript{2}}Institute of Computer Science, Freie Universit\"at Berlin, Berlin, Germany. 
\email{heunak@mi.fu-berlin.de} 
\newline
\llap{\textsuperscript{3}}Instituto de Matem\'aticas, Universidad Nacion\'al Aut\'onoma de M\'exico, Juriquilla, Mexico. 
\email{luis@matem.unam.mx} 
\newline
\llap{\textsuperscript{4}}Centro de Ciencias Matem\'aticas, Universidad Nacion\'al Aut\'onoma de M\'exico, Morelia, Mexico.
\email{e.roldan@im.unam.mx}
\end{minipage}

\end{center}

\begin{abstract}
A shadow of a geometric object $A$ in a given direction $v$ is the orthogonal projection of $A$ on the hyperplane orthogonal to $v$. 
We show that any topological embedding of a circle into Euclidean $d$-space can have at most two shadows that are simple paths in linearly independent directions. 
The proof is topological and uses an analog of basic properties of degree of maps on a circle to relations on a circle. 
This extends a previous result which dealt with the case $d=3$.
 \end{abstract}

\section{Introduction}

Given a set $A$ in $\R^d$, we define the $i$-th coordinate shadow of $A$ as the image of $A$ 
by the orthogonal projection to the coordinate hyperplane $\{(x_1,\dots,x_d) \in\R^d : x_i = 0\}$.
Suppose we want to draw a closed curve in $\R^d$ so as to maximize the number of shadows that are paths.
It is easy to see that two shadows can be paths. Just consider the unit circle in a coordinate plane 
$A = \{(x_1,x_2,0\dots,0) \in\R^d:x_1^2+x_2^2=1\}$.  The $1$-st and $2$-nd coordinate shadows of $A$ are paths, but all others are circles.
We show that this is the best that can be done.

\begin{theorem}[version 1]\label{thm:curve}
A simple closed curve in $\R^d$ has at most two coordinate shadows that are simple paths.
\end{theorem}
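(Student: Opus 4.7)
Assume for contradiction that three coordinate shadows are simple paths; after relabeling, say these are the shadows in directions $1, 2, 3$. The key observation is that if two distinct points $s, t \in S^1$ satisfy $\pi_i(\gamma(s)) = \pi_i(\gamma(t))$ for each $i \in \{1, 2, 3\}$, then $\gamma(s)$ and $\gamma(t)$ agree on every coordinate (each coordinate $j$ is retained by at least one of the three projections $\pi_i$), so $\gamma(s) = \gamma(t)$, contradicting that $\gamma$ is an embedding. The plan is therefore to produce such a pair by analyzing the three fiber relations $R_i = \{(s,t) \in S^1 \times S^1 : \pi_i \gamma(s) = \pi_i \gamma(t)\}$ and showing $R_1 \cap R_2 \cap R_3$ strictly contains the diagonal.

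First I would handle the generic case in which each $\pi_i \circ \gamma$ is two-to-one away from two isolated turning points. Then $R_i$ is the union of the diagonal with the graph of an orientation-reversing involution $\iota_i : S^1 \to S^1$ with exactly two fixed points. The composition $f = \iota_3 \iota_2 \iota_1$ is orientation-reversing, so a standard lifting argument shows it has exactly two fixed points on $S^1$. At any fixed point $s$, setting $s_0 = s$ and $s_k = \iota_k(s_{k-1})$, the telescoping identity
\[ 0 \;=\; \gamma(s) - \gamma(f(s)) \;=\; \sum_{k=1}^{3} \bigl(\gamma(s_{k-1}) - \gamma(s_k)\bigr) \]
displays $0$ as a sum of vectors in the linearly independent directions $\hat{e}_1, \hat{e}_2, \hat{e}_3$, forcing each summand to vanish, which by injectivity of $\gamma$ forces $s$ to be a common fixed point of all three $\iota_i$. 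Hence the three involutions share the same fixed pair $\{p,q\}$. Parameterizing the two arcs of $S^1 \setminus \{p, q\}$ identifies each $\iota_i$ with an increasing homeomorphism $\sigma_i : [0,1] \to [0,1]$ fixing the endpoints, and the compatibility $\pi_i \gamma|_{\alpha} = \pi_i \gamma|_{\beta} \circ \sigma_i$ then rules out both the possibility that any two $\sigma_i$ coincide (the two arcs of $\gamma$ would then collapse onto each other) and that the three $\sigma_i$ are pairwise distinct (invariance of coordinate functions under the nontrivial homeomorphisms $\sigma_i \sigma_k^{-1}$ propagates constancy through the shared coordinates and contradicts injectivity of the relevant path parameterizations).

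For a general topological embedding, the fibers of $\pi_i \circ \gamma$ over interior points can be arbitrarily complicated (e.g.\ Cantor sets or nested intervals), and the involution picture breaks down. This is where the paper's announced analog of degree of maps $S^1 \to S^1$ for relations on $S^1$ enters: each $R_i$ is assigned a topological invariant that, in the generic case, recovers the two-to-one folding recorded by $\iota_i$, and an intersection-theoretic version of the telescoping/fixed-point argument above is used to produce an off-diagonal point in $R_1 \cap R_2 \cap R_3$. The main obstacle is defining this degree for relations meaningfully: the ordinary degree of a map $S^1 \to [0,1]$ is zero because the target is contractible, so the new invariant must detect the folding structure of the relation itself rather than the homotopy class of the projection. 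Developing this machinery so that it correctly interacts with composition (the replacement for $\iota_3 \iota_2 \iota_1$) is the technically demanding heart of the proof.
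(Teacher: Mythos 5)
There is a genuine gap: your argument covers only the generic, two-to-one case (essentially the paper's Theorem~\ref{prop:2-to-1}), and for the general topological embedding you explicitly defer the ``degree for relations'' machinery rather than supplying it --- but that machinery, together with one more geometric lemma you do not have, \emph{is} the proof. Concretely, the paper first proves Lemma~\ref{lem:endpoints}: no point of the curve can project to an endpoint of all three shadow paths (via a hyperplane $\{x_1{=}c\}$ meeting the curve in several points and a first-crossing argument from both directions). Your substitute for this step in the generic case --- ruling out a common fixed pair $\{p,q\}$ of the three involutions by ``propagating constancy'' through the $\sigma_i\sigma_k^{-1}$ --- only reaches a contradiction because it ends with a whole subarc of the curve projecting to a single point of a shadow, which is forbidden by the two-to-one hypothesis; for a general embedding such degenerate fibers are exactly what you must allow, so this endgame does not transfer. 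Without an analog of Lemma~\ref{lem:endpoints}, even a successful fixed-point argument in the general case leaves you with nothing to contradict.

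The framing of your general strategy is also off target. The full fiber relations $R_i=\{(s,t):\pi_i\gamma(s)=\pi_i\gamma(t)\}$ contain the diagonal, so every point is a fixed point of each $R_i$ and of their composition; no degree or fixed-point invariant attached to the $R_i$ themselves can detect anything. Moreover, an off-diagonal point of $R_1\cap R_2\cap R_3$ (indeed already of $R_1\cap R_2$) is flatly impossible for an embedded curve, so ``show the triple intersection strictly contains the diagonal'' is just a restatement of the contradiction you want, not something an intersection-theoretic argument on the $R_i$ can produce. The paper's key move is different: split the curve at the two preimages $a_i,\tilde a_i$ of the $i$-th shadow's endpoints into arcs $T_i,B_i$ and work with the flip relation $\Gamma_i$ between $T_i$ and $B_i$, whose \emph{only} fixed points are $a_i,\tilde a_i$; thicken to compact neighborhoods $\Gamma_{i,\eps}$; construct inside each a closed curve of bidegree $(1,-1)$ in the torus by a fiber-product (mountain-climbing) argument with polygonal approximations (Claim~\ref{clm:curve_i}; note Figure~\ref{fig:paths} shows the needed connectivity is not automatic); compose such curves through relations by a parity-of-fibers argument requiring odd degrees (Lemma~\ref{lem:compose_rel}); deduce a fixed point of $\Gamma_{3,\eps}\circ\Gamma_{2,\eps}\circ\Gamma_{1,\eps}$ because a curve of bidegree $(k,-k)$ must cross the diagonal (Lemma~\ref{lem:relfixedpoint}); and pass to the limit $\eps\to 0$ by compactness. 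The resulting point $q_0$ satisfies $q_0=q_1=q_2$ by your telescoping identity and hence projects to an endpoint of all three shadows, and the contradiction is with Lemma~\ref{lem:endpoints} --- not with injectivity of $\gamma$. None of these steps is present in your proposal, so the theorem is not proved as stated.
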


By considering a curve up to linear transformations, Theorem~\ref{thm:curve} can be restated as follows:

\addtocounter{theorem}{-1}
\begin{theorem}[version 2]\label{thm:linearindep}
{For any simple closed curve $\curve$ in $\R^d$, it is not possible to project $\curve$ in three linearly independent directions such that the image by each projection is a simple path.}
\end{theorem}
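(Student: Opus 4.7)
Argue by contradiction. Suppose $\curve$ is a simple closed curve in $\R^d$, parametrized by an embedding $\gamma : \cir \hookrightarrow \R^d$, and that three linearly independent directions $v_1, v_2, v_3$ produce simple path shadows $P_i := \pi_{v_i}(\curve)$. Fix homeomorphisms $\phi_i : P_i \to [0,1]$ and set $f_i := \phi_i \circ \pi_{v_i} \circ \gamma : \cir \to [0,1]$, each a continuous surjection. The first observation is: if distinct $x, y \in \cir$ satisfy $f_i(x) = f_i(y)$ for all three $i$, then $\gamma(x) - \gamma(y)$ lies in $\ker(\pi_{v_i}) = \R v_i$ for each $i$, and by linear independence $\bigcap_{i} \R v_i = \{0\}$, contradicting the injectivity of $\gamma$. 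Hence $\sim_1 \cap \sim_2 \cap \sim_3 \,=\, \Delta_{\cir}$, where $\sim_i$ is the equivalence relation on $\cir$ induced by $f_i$. This alone is not a contradiction (two path shadows in two independent directions already force $\sim_1 \cap \sim_2 = \Delta$, and such configurations exist, e.g.\ the planar circle), so the third direction must enter through further topological work.

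The core of the argument, following the hint in the abstract, is an analog of degree theory for the closed symmetric relations $R_i := \{(x,y) \in \cir \times \cir : f_i(x) = f_i(y)\}$ in the torus $\cir \times \cir$. The plan is to assign to each $R_i$ an integer invariant generalizing the winding number of a map $\cir \to \cir$. Heuristically, a surjection $f_i : \cir \to [0,1]$ folds the circle over the interval some number of times; the off-diagonal part $R_i \setminus \Delta$ assembles into closed curves in the torus whose total homology class records this fold count. The needed properties are that the invariant is well-defined and nonzero whenever $f_i$ is surjective, that it is additive over connected components of $R_i \setminus \Delta$, and that it obeys a Bezout-style intersection formula computing the torus intersection number of $R_i$ with $R_j$ off the diagonal in terms of the two invariants.

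With this framework in hand, the contradiction follows the classical pattern. Since each $f_i$ surjects, each invariant is nontrivial; pairwise, the intersection formula forces each $R_i \cap R_j$ to have a nonzero algebraic count of off-diagonal points; and the three-way compatibility between these pairwise intersections forces a common triple coincidence off the diagonal, contradicting $\sim_1 \cap \sim_2 \cap \sim_3 = \Delta_{\cir}$. This is morally the degree-theoretic obstruction that prevents three high-winding maps from being simultaneously coincidence-free.

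The principal obstacle is the middle step: cleanly formulating the degree invariant for these relations. The $R_i$ arising from arbitrary continuous surjections onto $[0,1]$ need not be graphs of maps; $f_i^{-1}(0)$ and $f_i^{-1}(1)$ may be arbitrary closed subsets of $\cir$ rather than finite sets, and $R_i$ can have tangencies and higher-order singularities at the branch locus. Moreover, $\gamma$ is only a topological (not smooth) embedding, so one cannot perturb to generic position without loss. Setting up an invariant robust enough to handle these degenerations yet algebraic enough to satisfy the needed intersection formula is the technical heart of the paper; with this machinery in place, the derivation of the contradiction is essentially formal.
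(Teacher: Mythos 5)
There is a genuine gap, and it is not only the one you flag yourself (the unconstructed invariant); your intended endgame is also wrong. The coincidence that any degree-type argument can force is a chain $(q_0,q_1)\in R_1$, $(q_1,q_2)\in R_2$, $(q_2,q_0)\in R_3$, and then $\gamma(q_1)-\gamma(q_0)\in\R v_1$, $\gamma(q_2)-\gamma(q_1)\in\R v_2$, $\gamma(q_0)-\gamma(q_2)\in\R v_3$ sum to zero, so linear independence forces them all to vanish, i.e.\ $q_0=q_1=q_2$. That is a point \emph{on} the diagonal: a single point of the curve projecting to an endpoint of each of the three path shadows. This does not contradict injectivity of $\gamma$, so your conclusion ``a common triple coincidence off the diagonal, contradicting $\sim_1\cap\sim_2\cap\sim_3=\Delta_{\cir}$'' is not reached. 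Your relations $R_i$ contain the full diagonal (and $R_i\setminus\Delta$ is not closed; its closure meets $\Delta$ exactly over the endpoints of the shadow), so no intersection count ``off the diagonal'' can exclude this degenerate outcome. The paper needs, and proves, a separate and purely geometric lemma precisely for this configuration: no point of the curve can project to an endpoint of all three shadows (proved by cutting with a hyperplane $\{x_1=c\}$ missing that point and comparing first passage times of the two arcs, which forces two distinct cut points to coincide). Without an analog of that lemma your plan cannot close, no matter how the invariant is set up.

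On the machinery itself, what you defer as ``the technical heart'' is where the paper's route differs from your sketch in substance, and the degenerations you list are exactly why. Rather than a Bezout-style intersection number for the closed symmetric relations $R_i$, the paper restricts each relation to pairs of points on a ``top'' and ``bottom'' arc of the curve (so its only diagonal points are the two endpoint preimages), thickens it to a nested family of compact relations $\Gamma_{i,\eps}$, and produces inside each an explicit closed curve of bidegree $(1,-1)$ via a mountain-climbing/fiber-product argument on polygonal approximations (needed because fiber products of arbitrary continuous paths may fail to contain a connecting path). It then composes relations using a parity lemma (odd degrees and an odd fiber count give a cycle in the fiber product of the two curves), applies the fact that a curve in the torus of bidegree $(a,b)$ with $a\neq b$ must meet the diagonal to get a fixed point of the composed $\eps$-relations, and passes to the limit $\eps\to 0$ by compactness. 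Your proposal neither supplies this construction nor an alternative robust enough to handle non-graph relations, infinite endpoint preimages, and merely topological (non-smooth) embeddings, so as it stands it is a statement of intent for the hard step plus a final contradiction that the argument cannot actually deliver.
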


Coordinate shadows are a common and effective tool for visualizing and analyzing geometric objects
in high-dimensional space.  For example, orthogonal projections are
used in classical methods for data compression \cite[Chapter 4.26.]{salomon} and dimension
reduction \cite{pearson}. 

Trying to describe topological properties of a set $A$ using topological properties of the coordinate shadows of $A$ might seem futile at first glance, because so much information about the set is lost, and also because coordinate shadows are a very geometric feature that depend delicately on a choice coordinates.   
Our result, however, alludes to a topological relation between a set and its coordinate shadows, and provide an early step toward answering the following more general inquiry.

\begin{quote}
Given an embedding of a topological space $A$ in some Euclidean space of higher dimension,
what does the topology of its shadows tell us about the topology of $A$?
\end{quote}

This is in the spirit of tomography, which studies how a set $A$ can be reconstructed from the volume of the intersection of $A$ with lower dimensional spaces (the Radon transform of $A$).
This question can be seen as an extreme case of sparse sampling in tomography where the information available is restricted to the support function of the Radon transform along lines in $d$ linearly independent directions \cite{gardner1995geometric}. 

\subsection{Background}

This problem was motivated by the following question asked by H. W. Lenstra. 
\begin{quote}
Is there a simple closed curve in $3$-space such that all three of its coordinate shadows are trees? 
\end{quote}
The original motivation for Lenstra’s question was Oskar’s puzzle cube, three
mutually orthogonal rods that pass though slits in the sides of a hollow cube.
The rods are joined at a common point, and the slits in the sides of the cube
comprise three mazes. To move the rods to a desired configuration, all three of these
mazes must be solved simultaneously. Lenstra originally asked if the three mazes
could be designed so that the point where the three rods meet can move along a
trajectory that returns to its starting position without backtracking, thus tracing
a closed curve. Of course, none of the three mazes in the sides of the cube can
contain a closed curve individually, since that would result in a side of the cube being disconnected.

An affirmative answer to this question was given by J. R. Rickard \cite[p.112]{Win2007} (see Figure \ref{fig:trees})
and several such curves were later shown to exist (e.g. \cite{Gou2012}). 
More about the history of this problem can be found in the book \emph{Mathematical Mind-Benders} 
by Peter Winkler \cite[p. 118]{Win2007} under the name of \emph{Curve and Three Shadows}. 
In fact, the cover of this book shows Rickard's curve.

\begin{figure}
\centering
\includegraphics[width=0.5\textwidth]{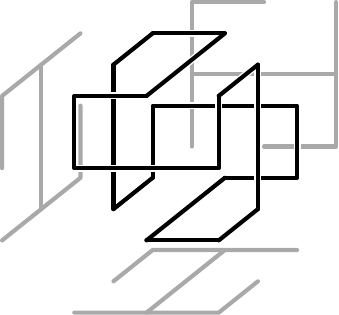}
\caption{A simple closed curve in $3$-space such that all of 
its shadows are trees.}
\label{fig:trees}
\end{figure}

A variant of this question is whether 
there is a simple closed curve in $3$-space such that its three coordinate shadows are simple paths. 
This was asked in CCCG 2007 \cite{DO2008} and in the 2012 Mathematics Research Communities workshop.
It has been shown that the answer is negative in~\cite{BDCD+2015}.
On the other hand, the same paper showed that there does exist a simple path in $3$-space 
such that all three of its shadows are simple closed curves~\cite{BDCD+2015}.
Our result is a generalization of the question asked in CCCG 2007 to any $d$-space.

As part of the proof, we employ a method that was used by Goodman, Pach, and Yap to show that two mountain climbers starting at sea level on opposite sides of a mountain (in the plane) can climb the mountain so that both climbers are always at equal altitude while making their way to the top \cite{goodman1989mountain}.

\subsection{Organization and structure of proof}

We will prove Theorem~\ref{thm:curve} by contradiction.  
Given a simple closed curve $\curve$ with three coordinate shadows that are each paths, 
we show in Section~\ref{sec:endpoints} that there cannot be a point $q_0 \in \curve$ that projects to an endpoint of each shadow.  
Then, in Section~\ref{sec:proof}, we show that such a point $q_0$ must exist. 
To show that $q_0$ exists, we define a relation on the curve such that any fixed point of the relation would project to an endpoint of each shadow.  Finally, we show that this relation does indeed have a fixed point.
Before getting into the proof that $q_0$ must exist, we prove a special case of Theorem \ref{thm:curve} in Section~\ref{sec:intuition} to provide geometric intuition.  For this, we use the fact that a degree ${-}1$ map from the circle to itself has a fixed point. 


\subsection{Notation and Terminology}

Let $\{x_i{=}c\}$ denote the hyperplane $\{(x_1,\dots,x_d)\in \R^d : x_i =c\}$, and let $\I$ denote the closed unit interval, 
$\cir$ will be the circle, and $\mb{T}^2 = \cir \times \cir$ will be the torus.  
Let $e_1,\dots,e_d$ denote the standard basis vectors in $\R^d$, and 
$e_1^*,\dots,e_d^* : \R^d \to \R$ denote the dual coordinate functions. 
Let $\pi_i : \R^d \to \{x_i{=}0\}$ be the orthogonal projection to the $i$-th coordinate hyperplane.
For a function $\phi: X \to Y$, let $\rangeof{\phi} = \phi(X)$ denote
the range of $\phi$.  Let $X^\circ$ denote the interior of a set.

Here we define a graph to be a $1$-dimensional cell complex.  That is,
a graph is a topological space consisting of a set of
vertices and edges, where an edge between a pairs of vertices $v,w$ is
given by a homeomorphic copy of the unit interval with endpoints
$v,w$, whose interior is disjoint from all other edges. 
Here it is enough to consider simple graphs, but allowing graphs to have loops or multiple edges in general will have no impact on our arguments. 
Generally we will be interested in graphs that are embedded in a torus where every vertex has degree $1$ or $2$. 

To avoid confusions due to multiple meanings of degree, we refer to the degree of a map from the circle to itself as topological degree and the degree of a vertex in a graph as graphical degree.

\section{Endpoints of three shadows}\label{sec:endpoints}

We will prove Theorem \ref{thm:curve} by contradiction using the following lemma.

\begin{lemma}\label{lem:endpoints}
If $\curve$ is a simple closed curve in $\R^d$ with three coordinate shadows that are each a path, then there cannot be a point $q_0 \in \curve$ that is projected to an endpoint of each of the three paths.
\end{lemma}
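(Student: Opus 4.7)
The plan is to argue by contradiction: suppose such a $q_0 \in \curve$ exists.

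First, for each $i \in \{1,2,3\}$, since $\pi_i(\curve)$ is a simple path with $\pi_i(q_0)$ an endpoint, I parametrize it as a homeomorphism $P_i \colon [0,1] \to \pi_i(\curve)$ with $P_i(0) = \pi_i(q_0)$. Then $f_i := P_i^{-1}\circ \pi_i \colon \curve \to [0,1]$ is a continuous surjection with $f_i(q_0) = 0$, and the combined map $F := (f_1,f_2,f_3) \colon \curve \to [0,1]^3$ sends $q_0$ to the corner $(0,0,0)$. I verify that $F$ is injective: if $F(p) = F(q)$, then $\pi_i(p) = \pi_i(q)$ for every $i \in \{1,2,3\}$; since $\pi_i$ preserves every coordinate of $\R^d$ other than the $i$-th, and for any coordinate index $k$ there is some $i \in \{1,2,3\}$ with $i \ne k$, we conclude $p = q$. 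Hence $F$ embeds $\curve$ as a simple closed curve in the cube $[0,1]^3$ through the corner $(0,0,0)$, and each $f_i$ is surjective so the image also touches each opposite face $\{t_i = 1\}$.

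Next, I establish the key local fact that, for each $i$, $\pi_i|_U$ fails to be injective on every neighborhood $U$ of $q_0$ in $\curve$. Otherwise, taking $U$ to be a compact sub-arc, $\pi_i|_U$ would be a homeomorphism onto a sub-arc of $\pi_i(\curve)$ in which $\pi_i(q_0)$ is an interior point; this would give $\pi_i(q_0)$ a two-sided neighborhood in the shadow $\pi_i(\curve)$, contradicting that $\pi_i(q_0)$ is an endpoint of this simple path.

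The final step, and the main obstacle, is to convert the injectivity of $F$ together with this simultaneous local non-injectivity of $\pi_1,\pi_2,\pi_3$ into a contradiction. The natural target is to produce distinct points $p \ne p'$ on $\curve$ arbitrarily near $q_0$ with $\pi_i(p) = \pi_i(p')$ for all three $i$; this would give $F(p) = F(p')$, contradicting that $F$ is an embedding. The difficulty is that three coincidence conditions on two free parameters generically have no common solution, so the argument must exploit the strong rigidity imposed by $(0,0,0)$ being a corner of the cube together with each $f_i$ attaining both its minimum $0$ and maximum $1$. I would approach this via a local analysis of the two branches of $F(\curve)$ emanating from the corner into the closed positive orthant, combined with an intermediate-value or winding argument that relates how the two branches exit a small sphere around $(0,0,0)$ and reach each opposite face $\{t_i=1\}$, to force the required coincidence.
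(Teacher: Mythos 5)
There is a genuine gap, and you have in fact flagged it yourself: the entire content of the lemma is concentrated in your unfinished ``final step,'' and the sketch you give for it cannot work with the data you have retained. Your reduction keeps only the following information: $F=(f_1,f_2,f_3)$ embeds $\curve$ as a simple closed curve in $[0,1]^3$ passing through the corner $(0,0,0)$, each coordinate function is surjective, and each $f_i$ fails to be injective on every neighborhood of $q_0$. But this abstract configuration is perfectly consistent: for example, the union of the two arcs $t\mapsto(t,t^2,t^3)$ and $t\mapsto(t,\sqrt{t},t)$, $t\in[0,1]$, is an embedded closed curve in the cube through the corner, with all three coordinates surjective and each coordinate non-injective near the corner. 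So no ``intermediate-value or winding argument'' phrased purely in terms of the image $F(\curve)$ in the cube can produce the coincidence $F(p)=F(p')$ with $p\neq p'$ (which, as you yourself proved, never happens). Any completion must reach back to the way $f_1,f_2,f_3$ arise from orthogonal projections in $\R^d$, i.e.\ to the fact that the coordinate functions of $\R^d$ are shared among the shadows, and that structure is exactly what your reduction discards.

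For comparison, the paper's proof uses precisely that discarded structure. Choose a hyperplane $H=\{x_1=c\}$ meeting $\curve$ in more than one point and missing $q_0$, and let $p\neq p'$ be the endpoints of the component of $\curve\setminus H$ containing $q_0$. Traversing $\curve$ from $q_0$ in either direction, the projection to the second shadow is a path in $\pi_2(\curve)$ starting at the \emph{endpoint} $\pi_2(q_0)$; since the shadow is a simple path, the first point of it where the $x_1$-coordinate equals $c$ is a single well-defined point, so the first $H$-crossings $p$ and $p'$ of the two traversals satisfy $\pi_2(p)=\pi_2(p')$, and likewise $\pi_3(p)=\pi_3(p')$. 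Hence $p-p'$ is parallel to both $e_2$ and $e_3$, so $p=p'$, a contradiction. Note that the coincidence is found at the level set $\{x_1=c\}$, not near $q_0$ as your plan proposes; the endpoint hypothesis enters only through the ``first hitting point along the shadow path'' argument, which is the idea missing from your proposal.
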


\begin{proof}
Suppose the lemma is false and let $\curvemap:\cir \to \R^d$ be a simple closed curve with a point $q_0 \in \curve$ 
such that $\pi_i(\curve)$ is a path having $\pi_i(q_0)$ as an endpoint for $i=1,2,3$. 

\begin{figure}
\centering
 \includegraphics{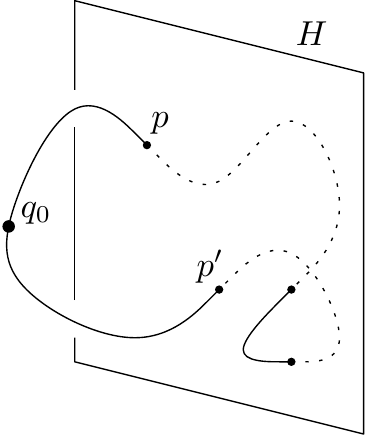}
\caption{The points $p$ and $p'$.}
\label{fig:plane}
\end{figure}

The curve $\curve$ cannot be contained in a hyperplane that is perpendicular to $e_1$; otherwise $\pi_1(\curve)$ would be a translate of $\curve$, but $\pi_1(\curve)$ is a path. 
Therefore, there exists a hyperplane $H = \{x_1 {=} c\}$ that intersects $\curve$ at multiple points, but does not contain $q_0$. 
Let $p$ and $p'$ be the endpoints of the connected component of $\curve\setminus H$ that contains $q_0$ (see Figure \ref{fig:plane}). 
Since $H$ intersects $\curve$ in multiple points, $p \neq p'$. 
Let $\phi : \I \to \curve$ be a parameterization of $\curve$ that starts at $q_0$ and passes through $p$ before $p'$, and let $\phi'(x) = \phi(1-x)$ parameterize $\curve$ in the opposite direction.  
And, let $\lambda_i : \I \to \pi_i(\curve)$ be a parameterization of the $i$th coordinate shadow that starts at the endpoint $\pi_i(q_0)$.
The first coordinate of $\phi$ attains the value $c$ for the first time at $p$, 
\[ p = \phi\left( \inf\{ t \in \I : \langle e_1, \phi(t)\rangle = c \}  \right). \]
Therefore $\pi_2(p)$ is the point where $\pi_2 \circ \phi$ attains the value $c$ in the first coordinate for the first time, which is also the point where $\lambda_2$ attains the value $c$ in the first coordinate for the first time.  Similarly, $\phi'$ attains the value $c$ in the first coordinate for the first time at $p'$, so $\pi_2(p')$ must also be the point where $\lambda_2$ attains the value $c$ in the first coordinate for the first time.  Hence $\pi_2(p) = \pi_2(p')$, which implies $p - p'$ is parallel to $e_2$.  Likewise $\pi_3(p) = \pi_3(p')$, which implies $p - p'$ is parallel to $e_3$.  Together these imply $p = p'$, which is a contradiction. 
\end{proof}

\section{Intuition}\label{sec:intuition}

\subsection{Fixed points and Degree} \label{subsec:degfp}

This section briefly reviews the topological degree of a map and its relevant properties \cite{hat}.  
The topological degree $\deg(f)$ of a map $f: \cir \to \cir$ is given by the number of times $f$ wraps around the circle.
That is, $\deg(f) = k$ when $f$ can be continuously deformed to the map $f_k(u) = (\sin(k\theta_u),\cos(k\theta_u))$ where $\theta_u$ is the positive angle between the vector $u$ and the vector $e_1 \in \cir$. 
Topological degree may alternatively be defined in terms of induced maps on homology. 
For a simple closed curve $\curvemap : \cir \to \R^d$ and a map $f : \curve \to \curve$, we let $\deg(f) = \deg(\curvemap^{-1} \circ f \circ \curvemap)$.

The two important properties of topological degree we use are: 
the degree of a composition of maps is the product of their degrees $\deg(f\circ g) = \deg(f)\deg(g)$, and if $f:\cir \to \cir$ does not have a fixed point then $\deg(f) = 1$.
Briefly, if $f$ does not have a fixed point, then 
\[
  f_t(x) = {\tilde f_t(x)}/{\|\tilde f_t(x)\|}, \text{ where } \tilde f_t(x) = (1-t)f(x) -tx,
\]
gives a continuous deformation from $f$ to the antipodal map. The antipodal map wraps once around the circle in the positive direction and therefore has degree $1$, so $f$ also has degree $1$.

In Section~\ref{sec:proof} we show how both of these properties can be generalized from maps on the circle to relations on the circle. 

\subsection{A Special Case}
Before we give a complete proof of Theorem \ref{thm:curve}, we present the basic ideas involved. To do this we assume that the simple closed curve is embedded in $\R^d$ in a very particular way.

\begin{theorem}[A special case of Theorem \ref{thm:curve}]\label{prop:2-to-1}
A simple closed curve in $\R^d$ cannot have three coordinate shadows such that each shadow is a simple path and the projection map to each shadow is $2$-to-$1$ on the interior of the path.
\end{theorem}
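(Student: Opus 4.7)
My plan is to exploit the 2-to-1 structure to build, for each of the three coordinate directions, an orientation-reversing involution of the circle whose fixed points are the preimages of the endpoints of that shadow, and then apply multiplicativity of topological degree to force a fixed point of their composition.

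First, for each $i \in \{1,2,3\}$, since $\pi_i \circ \curvemap$ is 2-to-1 on the interior of the path $\pi_i(\curve)$, I would argue that each endpoint of this path has a single preimage, giving two points $a_i, b_i \in \cir$ that split $\cir$ into two arcs, each mapped homeomorphically onto $\pi_i(\curve)$ by $\pi_i \circ \curvemap$. Define $\sigma_i : \cir \to \cir$ by letting $\sigma_i(q)$ be the unique point other than $q$ in the same fiber of $\pi_i \circ \curvemap$ when that fiber has two elements, and $\sigma_i(q) = q$ on $\{a_i, b_i\}$. Each $\sigma_i$ is then a continuous involution that fixes exactly $a_i, b_i$ and swaps the two arcs between them monotonically; in particular, it behaves like a reflection of $\cir$ across the ``axis'' through $a_i$ and $b_i$ and so has topological degree $-1$.

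By multiplicativity of degree under composition, $\deg(\sigma_3 \circ \sigma_2 \circ \sigma_1) = -1$. Using the criterion reviewed in Section~\ref{subsec:degfp} that a self-map of $\cir$ with no fixed point has degree $1$, this composition must have a fixed point $q_0$. Setting $q_1 = \sigma_1(q_0)$ and $q_2 = \sigma_2(q_1)$, so that $\sigma_3(q_2) = q_0$, we have that $\curvemap(q_0)$ and $\curvemap(q_1)$ agree outside the first coordinate, $\curvemap(q_1)$ and $\curvemap(q_2)$ agree outside the second, and $\curvemap(q_2)$ and $\curvemap(q_0)$ agree outside the third. Chasing each coordinate in turn forces $\curvemap(q_0) = \curvemap(q_1) = \curvemap(q_2)$, and injectivity of $\curvemap$ yields $q_0 = q_1 = q_2$. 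Hence $q_0 \in \{a_i,b_i\}$ for all $i$, so $\curvemap(q_0)$ projects to an endpoint of each of the three shadows, contradicting Lemma~\ref{lem:endpoints}.

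The main technical point I expect is verifying continuity of each $\sigma_i$ at its fold points $a_i, b_i$ and confirming that the ``swap the two arcs'' picture actually holds (that is, ruling out degenerate endpoint fibers that would spoil the involution); once these local checks are in hand, the degree computation and the coordinate chase are routine. This special case already captures the essential idea of the general proof, where the single-valued involutions $\sigma_i$ must be replaced by many-valued relations on $\cir$ and the degree argument lifted to that setting.
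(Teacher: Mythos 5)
Your proposal is correct and follows essentially the same route as the paper: your involutions $\sigma_i$ are exactly the paper's ``flipping'' maps $f_i$ of degree $-1$, the fixed point of $\sigma_3\circ\sigma_2\circ\sigma_1$ is obtained from the same degree argument, and your coordinate chase is equivalent to the paper's observation that $q_1-q_0$, $q_2-q_1$, $q_0-q_2$ are parallel to $e_1,e_2,e_3$ and sum to zero, before invoking Lemma~\ref{lem:endpoints}. The endpoint-fiber and continuity checks you flag are likewise assumed without further comment in the paper's own proof, so there is no substantive difference.
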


\begin{proof}
Suppose that $\curvemap:\cir \to \R^d$ is a simple closed curve such that $\pi_i(\curve)$ is a path for $i=1,2,3$ and that each $\pi_i$ restricted to $\curve$ is a $2$-to-$1$ map except for two points $a_i,\tilde a_i\in \curve$ which are mapped to the boundary of its shadow.

\begin{figure}
\centering
\includegraphics{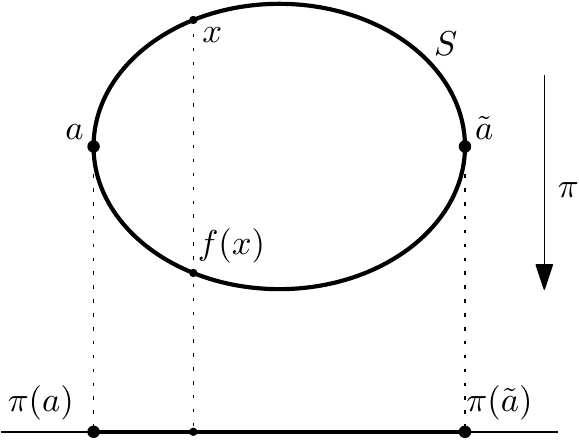}
\caption{The curve $\curve$ and the \emph{flipping} function $f$.}
\label{fig:proj}
\end{figure}

Then, there is a function $f_i:\curve \to \curve$ such that $a_i$ and $\tilde a_i$ are its only fixed points and $\pi_i(x)=\pi_i(f(x))$ for every $x \in \curve$ (see Figure~\ref{fig:proj}). This function has topological degree $-1$ and essentially ``{flips}'' $\curve$ in the $i$-th direction. 
Consequently, the degree of the composition $f_3\circ f_2\circ f_1$ is $-1$, and therefore has a fixed point $q_0\in \curve$.
Now consider the points
\[q_1=f_1(q_0),\quad q_2=f_2(q_1),\quad q_3=f_3(q_2)=q_0.\]
The three vectors $q_{i}-q_{i-1}$ for $i=1,2,3$ are respectively parallel to the first three elements of the canonical basis $e_1,e_2,e_3$ and add up to $0$.  Thus each of the vectors is $q_i-q_{i-1} = 0$, so $q_0=q_1=q_2$. This means that $q_0$ is a fixed point of each $f_i$ and is therefore mapped to an endpoint of each of the paths,
which is a contradiction by Lemma \ref{lem:endpoints}.
\end{proof}

\section{The General Case of Theorem \ref{thm:curve}}\label{sec:proof}

\subsection{Fixed points of Relations}

For Theorem~\ref{prop:2-to-1}, the assumption that the projection map is $2$-to-$1$ everywhere
in the interior of the path allows us to define the function
$f_i$ that ``flips'' the closed curve. 
In general, many points may be projected to a single point on the shadow, so instead of a map from the curve to itself, we obtain a relation between points on the curve.  
To prove Theorem \ref{thm:curve}, we adapt the argument in the previous section to relations. 

Recall that if $R \subset X\times Y$ and $R' \subset Y\times Z$ are the relations, their composition $R'\circ R$ and the inverse $R^{-1}$ are relations given by
\begin{align*}
R'\circ R &= \{(x,z):(x,y) \in R, (y,z) \in R'\text{ for some }y\in Y\},\\
R^{-1} &= \{(y,x) : (x,y) \in R\}.
\end{align*}
If $R\subset X\times X$ is a relation, a point $p\in X$ is a fixed point of $R$ when $(p,p)\in R$.
For a map 
\[ f = (f_1,\dots,f_n) : \cir \to (\cir \times \dots \times \cir),\] 
let $\deg (f) = (\deg (f_1), \dots, \deg(f_n))$.

We will make use of the fiber product of graphs in a manner similar to \cite{goodman1989mountain}.    
Given maps $f_i : X_i \to Y$ for $i \in \{1,2\}$, their fiber product is the set
\[ X_1 \times_{f_1,f_2} X_2 = \{(x_1,x_2) \in X_1 \times X_2 : f_1(x_1)=f_2(x_2) \}.\] 
When $f_1$ and $f_2$ are given by the same map $f$, respectively restricted to $X_1$ and $X_2$, we simply denote the fiber product by $X_1 \times_f X_2.$

If the domains of the maps $f_i : X_i \to Y$ are graphs $G_i = X_i$ ($1$-dimensional cell complexes), the target of the maps is either $Y = \mb{S}^1$ or $Y = \mb{R}^1$, 
and the $f_i$ is injective on edges, then we define the fiber product to also be a graph given by the following cell decomposition: a point $(x_1,x_2) \in G_1 \times_{f_1,f_2} G_2$ is a vertex when either $x_1$ is a vertex of $G_1$ or $x_2$ is a vertex of $G_2$.  
The compliment of the vertices is then the disjoint union of interiors of edges. 
See Figure~\ref{fig:graphs} for an example.

\begin{figure}
\centering
\includegraphics{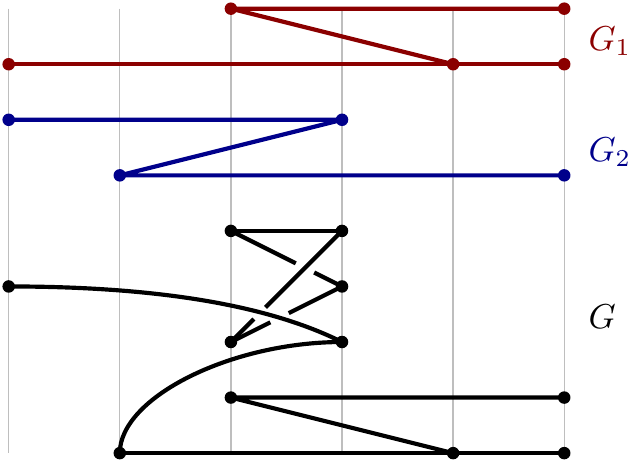}
\caption{The graphs $G_1$ and $G_2$ are given as planar drawing, the functions $f_1$ and $f_2$ are the projections in the vertical direction. Below is the fiber product $G = G_1 \times_{f_1,f_2} G_2$. In this representation the point $(x_1,x_2)\in G \subset G_1\times G_2$ is vertically aligned with the points $x_1 \in G_1$ and $x_2 \in G_2$.}
\label{fig:graphs}
\end{figure}

It is worth noting that the fiber product of two continuous surjections from the unit interval to itself with extrema fixed might not contain a path connecting the extrema. Such is the case in the example found in Figure~\ref{fig:paths}.

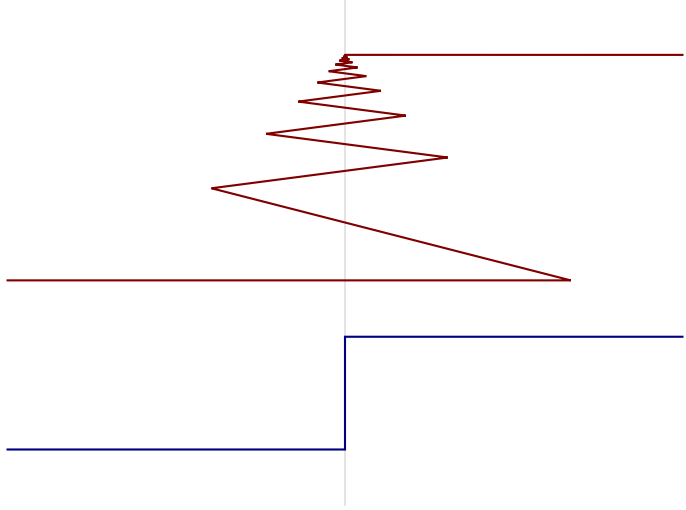
\begin{figure}
\centering
\begin{tikzpicture}[scale=1.5]

\draw[black!20]
(2,-2) -- (2,2.5)
;

\path
(0,0) coordinate (a)
(2,2) coordinate (b)
(4,0) coordinate (c)
;

\draw[red!50!black, thick] 
(-1,0) -- (c) (c)
\foreach \x in {1,...,15}
{ -- ($(a)!{1-1/1.3^(2*\x)}!(b)$)
($(a)!{1-1/1.3^(2*\x)}!(b)$) -- ($(c)!{1-1/1.3^(2*\x+1)}!(b)$) 
($(c)!{1-1/1.3^(2*\x+1)}!(b)$)
}
(b) -- (5,2)
;

\draw[blue!50!black, thick] 
(-1,-1.5) -- (2,-1.5) -- (2,-.5) -- (5,-.5)
;

\end{tikzpicture}
\caption{A pair of paths such that the product of their end points is not connected by a path in their fiber product.  Note that the upper path crosses the vertical line infinitely many times.}
\label{fig:paths}
\end{figure}

\begin{remark}\label{lem:graph_deg}
Given $G = G_1 \times_{f_1,f_2} G_2$ satisfying the above: the $G_i$ are graphs, the $f_i: G_i \to Y$ are injective on edges, and $Y\in\{\mb{S}^1,\mb{R}^1\}$;  
if $(x_1,x_2) = v \in G$ and $x_1 \in G_1$ are vertices but $x_2 \in G_2$ is not a vertex, then $v$ and $x_1$ have the same graphical degree.
\end{remark}

\begin{proof}
This follows from the observation that $v$ and $x_1$ have homeomorphic neighborhoods contained in their respective stars.  By the star of a vertex $v$, we mean the union of $v$ and its adjacent edges. 
To see this, consider an open interval $N_2 \subset G_2$ around $x_2$, and let $N_1$ be the connected component of $f_1^{-1}\circ f_2(N_2)$ that contains $x_1$.  
We may choose $N_2$ such that $N_1$ contains no vertices other than $x_1$.  Let $g_i$ be the restriction of $f_i$ to $N_i$ for $i\in\{1,2\}$.  Now the projection to the left factor gives a homeomorphism from the neighborhood $N_1 \times_{g_1,g_2} N_2$ of $v$ to the neighborhood $N_1$ of $x_1$.
\end{proof}

\begin{lemma}\label{lem:compose_rel}
Given relations $R_i \subset (\cir \times \cir)$ and curves $\phi_i : \cir \to R_i^{\circ}$ for $i\in\{1,2\}$ with $\deg(\phi_1) = (a,b_1)$ and $\deg(\phi_2) = (b_2,c)$, if both $b_1$ and $b_2$ are odd, then there is a curve $\phi : \cir \to (R_2\circ R_1)^{\circ}$ with $\deg(\phi) = (ka/b_1, kc/b_2)$ where $k$ is an odd common multiple of $b_1$ and $b_2$. 
\end{lemma}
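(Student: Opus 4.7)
The plan is to build $\phi$ as a parametrization of a suitable component of the fiber product of $\phi_1$ and $\phi_2$ over their common middle coordinate, then read off the degree from its homology class on the torus. Let $\psi_1 = \pi_2 \circ \phi_1$ and $\psi_2 = \pi_1 \circ \phi_2$, so $\psi_1, \psi_2 : \cir \to \cir$ have degrees $b_1$ and $b_2$ respectively. By a small perturbation that keeps each $\phi_i$ inside $R_i^\circ$, I may assume $\psi_1$ and $\psi_2$ are smooth with disjoint critical values. Then
\[
X \;=\; \cir \times_{\psi_1,\psi_2} \cir \;\subset\; \cir \times \cir \;=\; \mb{T}^2
\]
is a smoothly embedded closed $1$-submanifold of the torus, that is, a disjoint union of embedded circles $\gamma_1,\dots,\gamma_N$.

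Next I would compute the homology class of each $\gamma_j$ in $H_1(\mb{T}^2) \cong \mb{Z}^2$. Fix orientations and write $[\gamma_j] = (p_j, q_j)$. The two composites $\psi_1 \circ \mathrm{pr}_1$ and $\psi_2 \circ \mathrm{pr}_2$, viewed as maps $\mb{T}^2 \to \cir$, agree on $X$; applying them to $[\gamma_j]$ yields the identity $b_1 p_j = b_2 q_j$. Writing $d = \gcd(b_1, b_2)$, this forces $(p_j, q_j) = m_j (b_2/d,\, b_1/d)$ for some integer $m_j$. A transverse intersection count of $X$ with the meridians $\{u = u_0\}$ and $\{v = v_0\}$ shows that the total class of $X$ is $(b_2, b_1)$, hence after a consistent choice of orientations $\sum_j m_j = d$.

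Now the parity hypothesis enters. Since $b_1$ and $b_2$ are odd, so is $d$; therefore some $m_j$ is odd. Fix such a component $\gamma_j$, oriented so that $m_j > 0$, and let $\phi' : \cir \to \gamma_j$ be a homeomorphism. Define
\[
\phi(t) \;=\; \bigl(\pi_1 \phi_1(\mathrm{pr}_1 \phi'(t)),\ \pi_2 \phi_2(\mathrm{pr}_2 \phi'(t))\bigr).
\]
The map $F : \mb{T}^2 \to \mb{T}^2$ given by $(u, v) \mapsto (\pi_1 \phi_1(u), \pi_2 \phi_2(v))$ acts on $H_1$ by the diagonal matrix $\mathrm{diag}(a, c)$, so
\[
\deg(\phi) \;=\; \bigl(a\, m_j\, b_2/d,\ c\, m_j\, b_1/d\bigr) \;=\; (ka/b_1,\ kc/b_2), \quad k = m_j \cdot \mathrm{lcm}(b_1, b_2),
\]
and $k$ is an odd common multiple of $b_1$ and $b_2$ as required. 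To verify $\phi$ lands in $(R_2 \circ R_1)^\circ$: for each $t$ the points $\phi_1(\mathrm{pr}_1 \phi'(t)) \in R_1^\circ$ and $\phi_2(\mathrm{pr}_2 \phi'(t)) \in R_2^\circ$ admit small open product-neighborhoods in $R_1$ and $R_2$ sharing the common middle value, and these compose to an open neighborhood of $\phi(t)$ inside $R_2 \circ R_1$.

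The main obstacle I anticipate is the homological bookkeeping, specifically justifying $b_1 p_j = b_2 q_j$ together with the total class $(b_2, b_1)$ with the correct orientation conventions. A secondary concern is the perturbation step: since the paper's fiber-product framework is combinatorial (graphs with maps injective on edges), one may prefer to subdivide $\cir$ at the critical points of each $\psi_i$ and apply the graph fiber product directly; by Remark~\ref{lem:graph_deg}, the disjointness of critical values ensures every vertex of $X$ has graphical degree $2$, so $X$ is still a disjoint union of cycles and the parity argument goes through unchanged.
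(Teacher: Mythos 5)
Your proposal is correct and takes essentially the same route as the paper: both form the fiber product of the two curves over the shared middle coordinate, observe that it decomposes into disjoint circles, use the oddness of $b_1$ and $b_2$ (a parity count of preimages) to locate a component whose two projections have the required degrees, and then obtain $\deg(\phi)=(ka/b_1,kc/b_2)$ from multiplicativity of degree. Your smooth-transversality setup and homology-class bookkeeping (component classes proportional to $(b_2/d,b_1/d)$, total class via meridian intersections) merely replace the paper's polygonal graph fiber product and its odd count $|\psi_1^{-1}(y_0)|\,|\psi_2^{-1}(y_0)|$ of a generic fiber, so the difference is one of technical implementation rather than of argument.
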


For example, given
\[ \phi_1(\theta) = (\sin(\theta),\cos(\theta),\sin(3\theta),\cos(3\theta)), \]
\[ \phi_2(\theta) = (\sin(5\theta),\cos(5\theta),\sin(2\theta),\cos(2\theta)), \]
and $R_i$ a neighborhood of $\underline{\smash{\phi_i}}$,
we have $\underline{\smash{\phi}} \subset (R_2\circ R_1)^\circ$ where 
\[ \phi(\theta) = (\sin(5\theta),\cos(5\theta),\sin(6\theta),\cos(6\theta)). \]
Here $\deg(\phi_1) = (1,3)$, $\deg(\phi_2) = (5,2)$, and $\deg(\phi) = (5,6) = (k/3,2k/5)$ with $k=15$.

\begin{proof}[Proof of Lemma \ref{lem:compose_rel}]
We denote the factors of the curves $\phi_i : \mb{S}^1 \to \mb{T}^2$ by 
\[\phi_1=(\phi_{1\x},\phi_{1\y}) \text{ and } \phi_2 = (\phi_{2\y},\phi_{2\z}),\] 
and we denote their respective domains by $S_1$ and $S_2$. 
We may assume that the curves $\phi_i$ are straight edge drawings of cycle graphs $S_i$.  Otherwise replace $\phi_i$ with a straight edge approximation that has the same topological degrees and is  sufficiently close to $\phi_i$ to be contained in $R_i^\circ$.  We may further assume that the y-coordinate of the vertices of $\phi_1$ and $\phi_2$ are all distinct.
Let $G = S_1 \times_{\phi_{1\y},\phi_{2\y}} S_2$. 

Since the values of $\phi_{1\y}$ and $\phi_{2\y}$ are distinct at every vertex of $S_1$ and $S_2$, we have for each vertex $(s_1,s_2)$ of $G$ that $s_1$ and $s_2$ cannot both be vertices.
Hence by Remark~\ref{lem:graph_deg}, every vertex of $G$ has graphical degree $2$, so $G$ must be a union of disjoint cycles.  
Choose $y_0 \in \mb{S}^1$ that is not the coordinate of any vertex of $G$, and consider the fiber $F \subset G$ above $y_0$,
\[F = \{(s_1,s_2) \in S_1 \times S_2 : \phi_{1\y}(s_1) = \phi_{2\y}(s_2) = y_0 \}. \]

Since $b_1$ is odd, $\abs{\phi_{1\y}^{-1}(y_0)}$ is odd, and since $b_2$ is odd, $\abs{\phi_{2\y}^{-1}(y_0)}$ is odd, so $\abs{F} = \abs{\phi_{1\y}^{-1}(y_0)}\abs{\phi_{2\y}^{-1}(y_0)}$ is odd.  
Therefore, at least one of the cycles of $G$ must intersect the fiber $F$ in an odd number of points, and this cycle is the image of a simple closed curve $\sigma=(\sigma_1,\sigma_2) : \mb{S}^1 \to S_1 \times S_2$ that crosses $F$ an odd number of times.
There exists
\[ \xi : \cir \to \cir, \text{ such that }
\xi(t) = \phi_{1\y} \circ \sigma_1(t) = \phi_{2\y} \circ \sigma_2(t),\]
where the second equality holds since the range of $\sigma$ is in $G$.
Let $k$ be the topological degree of $\xi$.  The map $\xi$ crosses $y_0$ an odd number of times, so $k$ is odd. 

Recall that the topological degree of a composition of maps is the product of their degrees. 
Since $\deg(\phi_{1\y}) = b_1$ and $\deg(\phi_{1\y})\deg(\sigma_1) = \deg(\xi) = k$, we have $\deg(\sigma_1) = k/b_1$,
and similarly $\deg(\sigma_2) = k/b_2$.
Furthermore, since degree is integer valued, $k$ is a common multiple of $b_1$ and $b_2$.
Let 
\[\phi(t) = (\phi_{1\x} \circ \sigma_1(t), \phi_{2\z} \circ \sigma_2(t)). \] 
Observe that $\phi(t) \in (R_2 \circ R_1)^\circ$, since the range of $\sigma$ is in $G$. 
Since $\deg(\phi_{1\x}) = a$, and $\deg(\phi_{2\z}) = c$, 
we have $\deg(\phi) = (ka/b_1,kc/b_2)$.
\end{proof}

\begin{lemma}\label{lem:relfixedpoint}
For a curve $\phi : \cir \to \mb{T}^2$ with $\deg(\phi) = (a,b)$, if $a \neq b$, then $\underline{\smash{\phi}}$ intersects the diagonal $D = \{(u,u): u \in \mb{S}^1\} \subset \mb{T}^2$, and hence any relation $R$ containing $\underline{\smash{\phi}}$ has a fixed point. 
\end{lemma}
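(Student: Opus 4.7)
The plan is to argue by contradiction. Suppose $\rangeof{\phi}\cap D = \emptyset$, so writing $\phi = (\phi_\x,\phi_\y)$ we have $\phi_\x(t) \neq \phi_\y(t)$ for every $t \in \cir$. I would then compare $\phi_\x$ and $\phi_\y$ through lifts to the universal cover of $\cir$ and derive a contradiction from the hypothesis $a \neq b$.

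Identifying $\cir$ with $\R/\Z$ via the covering map $p : \R \to \cir$, I lift $\phi_\x$ and $\phi_\y$ (viewed as maps $\cir \to \cir$) to continuous maps $\tilde\phi_\x, \tilde\phi_\y : \R \to \R$. By the standard characterization of topological degree via such lifts, $\tilde\phi_\x(\theta+1) - \tilde\phi_\x(\theta) = a$ and $\tilde\phi_\y(\theta+1) - \tilde\phi_\y(\theta) = b$ for every $\theta \in \R$. Setting $\tilde\psi := \tilde\phi_\x - \tilde\phi_\y$, one iteration at a time gives
\[
\tilde\psi(\theta + n) \;=\; \tilde\psi(\theta) + n(a-b) \quad \text{for all } n \in \mb{Z}.
\]

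The assumption $\phi_\x(t) \neq \phi_\y(t)$ translates, under the lift, to $\tilde\psi(\theta) \notin \mb{Z}$ for every $\theta \in \R$. Since $\tilde\psi$ is continuous and avoids $\mb{Z}$, its image is a connected subset of $\R \setminus \mb{Z}$, hence lies in a single open interval $(n_0, n_0 + 1)$. In particular, $\tilde\psi$ is a bounded function on $\R$. But when $a \neq b$, the values $\tilde\psi(n) = \tilde\psi(0) + n(a-b)$ are unbounded in $n$, which is the desired contradiction. Thus $\rangeof{\phi}$ meets $D$, and the second assertion is immediate: if $(u,u) \in \rangeof{\phi} \subset R$, then $u$ is a fixed point of $R$ by definition.

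The main obstacle is really just bookkeeping about the lifting convention; there is no substantial difficulty. Geometrically this is the statement that the algebraic intersection number on $\mb{T}^2$ of a curve of class $(a,b)$ with the diagonal (of class $(1,1)$) is $a - b$, and a curve with nonzero algebraic intersection with $D$ cannot be disjoint from $D$.
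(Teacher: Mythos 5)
Your proof is correct, but it takes a different route from the paper. The paper argues homotopy-theoretically: it exhibits an explicit deformation retraction of $\mb{T}^2 \setminus D$ onto the curve $\psi(u) = (u,-u)$, which has degree $(1,1)$, so any curve avoiding $D$ has degree a multiple of $(1,1)$, i.e.\ of the form $(a,a)$; hence degree $(a,b)$ with $a \neq b$ forces an intersection with $D$. You instead work in the universal cover: lifting the two factors to $\tilde\phi_\x, \tilde\phi_\y : \R \to \R$ with $\tilde\phi_\x(\theta+1) = \tilde\phi_\x(\theta) + a$ and $\tilde\phi_\y(\theta+1) = \tilde\phi_\y(\theta) + b$, you note that avoiding the diagonal means $\tilde\psi = \tilde\phi_\x - \tilde\phi_\y$ never takes integer values, so by connectedness its image lies in a single interval $(n_0, n_0+1)$, contradicting $\tilde\psi(\theta+1) = \tilde\psi(\theta) + (a-b) \neq \tilde\psi(\theta)$ modulo an integer jump of size $|a-b| \geq 1$. (Indeed you only need one period, not the full unboundedness in $n$.) Both arguments are complete; yours is more elementary and self-contained, needing only the lifting characterization of degree, and it makes transparent the intersection-number interpretation you mention at the end, while the paper's retraction argument is chosen to mirror its earlier ``no fixed point implies degree $1$'' deformation in Section 3.1, keeping the theme of transporting degree arguments from maps to relations. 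The final step, that a point $(u,u)$ in $\rangeof{\phi} \subset R$ is a fixed point of $R$, is handled identically in both.
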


\begin{proof}
There is a deformation retraction $\rho_t$ from $\mb{T}^2 \setminus D$ to the curve $\psi(u) = (u,-u)$ with $\deg(\psi) = (1,1)$ given by
\[ \rho_t(u,v) = (u,\tfrac{w_t}{\|w_t\|}), \ \text{where} \ w_t = t(-u)+(1-t)v. \] 
If a curve $\psi' : \cir \to \mb{T}^2$ avoids $D$,  
then $\psi'$ can be deformed by $\rho$ to be in $\underline{\smash{\psi}}$, so $\deg(\psi')$ is a multiple of $\deg(\psi)$, which means $\deg(\psi') = (a,a)$ for some $a\in \mb{Z}$. 
Thus, any curve $\phi$ with $\deg(\phi) = (a,b)$ for $a \neq b$ cannot avoid the diagonal. 
\end{proof}

\subsection{Proof of Theorem \ref{thm:curve}}

Assume there is a simple closed curve $\curvemap : \cir \to \R^d$ such that the $i$th coordinate shadow $\pi_i(\curve) = \pathi{i}$ is a path $\pathimap{i}: \I \to \{x_i{=}0\} \subset \R^d$ for $i=1,2,3$,
and split $\curve$ into two closed arcs $T_i$ and $B_i$ (top and bottom) so that $\pi_i(T_i) = \pi_i(B_i) = \pi_i(\curve)$ and $T_i \cap B_i = \{a_i,\tilde a_i\}$, where $\pi_i(a_i)$ and $\pi_i(\tilde a_i)$ are the endpoints of $\pathi{i}$. 
We choose labels so that a closed curve $\cir \to \curve$ that first traverses $T_i$ from $a_i$ to $\tilde a_i$ then traverses $B_i$ from $\tilde a_i$ back to $a_i$ has topological degree $1$.

We define the relation $\Gamma_i \subset \curve \times \curve$ between a point in $T_i$ and a point in $B_i$ if they are mapped to the same point (see Figure \ref{fig:proj2}).
For $\eps > 0$ let 
\begin{align*}
\tilde\Gamma_{i,\eps} & = \left\{(t,b)\in T_i\times B_i: |\lambda_i^{-1}(\pi_i(t)) -\lambda_i^{-1}(\pi_i(b))| \leq \eps \right\},\\
N_\eps(p) &= \{(x,y) \in \curve^2: \|x-p\|+\|y-p\|\leq\eps \}, \\
\Gamma_{i,\eps} &= \tilde \Gamma_{i,\eps} \cup \tilde \Gamma_{i,\eps}^{-1} \cup N_\eps(a_i) \cup N_\eps(\tilde a_i). 
\end{align*}
Note that the $\Gamma_{i,\eps}$ are a nested family of compact sets with $ \cap_{\eps > 0} \Gamma_{i,\eps} = \Gamma_i$,
and that $a_i$ and $\tilde a_i$ are the only two fixed points of $\Gamma_i$.

To show that $\Gamma_3\circ\Gamma_2\circ\Gamma_1$ has a fixed point, we prove a series of claims below. 

\begin{figure}
\centering
\includegraphics{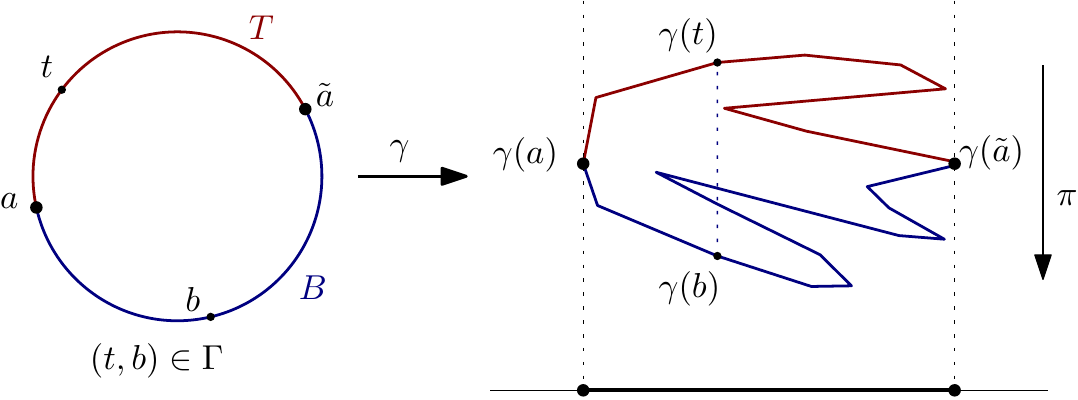}
\caption{The top and bottom arcs, and the relation $\Gamma$.}
\label{fig:proj2}
\end{figure}

\begin{claim}\label{clm:curve_i}
For $\eps > 0$ and for each $i\in\{1,2,3\}$, there is a curve $\psi_{i,\eps} : \mb{S}^1 \to \Gamma_{i,\eps}^\circ $ such that $\deg(\psi_{i,\eps}) = (1,-1)$.
\end{claim}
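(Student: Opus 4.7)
The plan is to use a mountain-climbing construction in the spirit of \cite{goodman1989mountain}, modeled on the proof of Lemma~\ref{lem:compose_rel}. Since both $T_i$ and $B_i$ project surjectively onto $\pathi{i}$, I parameterize $T_i$ and $B_i$ by the unit interval $\I$, each starting at $a_i$ and ending at $\tilde a_i$, and define continuous ``height'' functions $\alpha, \beta : \I \to \I$ by composition with $\lambda_i^{-1} \circ \pi_i$. After choosing labels for the endpoints of $\pathi{i}$ consistently, $\alpha(0) = \beta(0) = 0$ and $\alpha(1) = \beta(1) = 1$. The strategy is to build $\psi_{i,\eps}$ as the concatenation of two halves: a path in $T_i \times B_i$ from $(a_i, a_i)$ to $(\tilde a_i, \tilde a_i)$ whose two coordinates stay within $\eps$ of each other in height, followed by its ``mirror'' in $B_i \times T_i$ that traverses $\curve$ in the opposite factor.

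The first half comes from producing $\sigma = (\sigma_1, \sigma_2) : \I \to \I^2$ with $\sigma(0) = (0,0)$, $\sigma(1) = (1,1)$, and $|\alpha(\sigma_1(u)) - \beta(\sigma_2(u))| < \eps$ for every $u$. The main obstacle is that the exact fiber product $\{(s,u) : \alpha(s) = \beta(u)\}$ need not contain any path connecting $(0,0)$ to $(1,1)$ (Figure~\ref{fig:paths}). I avoid this pathology by exploiting the $\eps$-slack built into $\tilde\Gamma_{i,\eps}$: following the PL approximation idea used in the proof of Lemma~\ref{lem:compose_rel}, uniformly approximate $\alpha$ and $\beta$ within $\eps/3$ by piecewise-linear functions $\tilde\alpha, \tilde\beta$ with matching endpoint values, and perturb them into general position so that (i) $\tilde\alpha$ and $\tilde\beta$ attain the values $0$ and $1$ only at the respective endpoints $0$ and $1$, and (ii) no breakpoint of $\tilde\alpha$ shares its value with a breakpoint of $\tilde\beta$. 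The fiber product graph $G = \I \times_{\tilde\alpha, \tilde\beta} \I$ is then a 1-complex in which every interior vertex has graphical degree $2$ (by Remark~\ref{lem:graph_deg}) while $(0,0)$ and $(1,1)$ are the only vertices of degree $1$, so they must be joined by an arc in $G$. Parameterizing this arc gives $\sigma$ with $\tilde\alpha \circ \sigma_1 = \tilde\beta \circ \sigma_2$, and the triangle inequality yields $|\alpha \circ \sigma_1 - \beta \circ \sigma_2| \leq 2\eps/3 < \eps$. Consequently, $u \mapsto (T_i(\sigma_1(u)), B_i(\sigma_2(u)))$ is a path in $\Gamma_{i,\eps}^\circ$ whose endpoints sit inside the interiors of $N_\eps(a_i)$ and $N_\eps(\tilde a_i)$.

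To close the loop, concatenate with the coordinate-swapped reverse $u \mapsto (B_i(\sigma_2(1-u)), T_i(\sigma_1(1-u)))$, which by the same estimate lies in $\Gamma_{i,\eps}^\circ$. For the degree, parameterize $\curve \cong \cir$ so that $T_i$ is the arc from $a_i$ at angle $0$ to $\tilde a_i$ at angle $\pi$ and $B_i$ is the arc from $\tilde a_i$ back to $a_i$ at angle $2\pi$, consistent with the positive-orientation convention fixed in the setup of Section~\ref{sec:proof}. During the first half of $\psi_{i,\eps}$ the first coordinate remains in $T_i$ and its lifted angle increases by $\pi$, while the second coordinate remains in $B_i$ and its lifted angle drops by $\pi$; during the second half the first coordinate remains in $B_i$ and gains another $\pi$, while the second coordinate remains in $T_i$ and loses another $\pi$. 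Summing gives total lifted angle changes of $+2\pi$ and $-2\pi$, so $\deg(\psi_{i,\eps}) = (1,-1)$.
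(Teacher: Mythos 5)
Your proposal is correct and follows essentially the same route as the paper: approximate within the $\eps$-slack by piecewise-linear data in general position, take the fiber product, use Remark~\ref{lem:graph_deg} to see that $(a_i,a_i)$ and $(\tilde a_i,\tilde a_i)$ are the only degree-one vertices and hence are joined by an arc, then concatenate with the coordinate-swapped reversal to get degree $(1,-1)$. The only difference is presentational — you approximate the height functions $\alpha,\beta$ on parameter intervals rather than unraveling $\curve$ into the plane via $\eta_i$ and approximating the arcs polygonally, which amounts to the same construction.
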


\begin{proof}

Since $\curve$ is contained in the curved surface $\pi_i^{-1}(\pathi{i}) = \pathi{i} +\R e_i$, we can factor $\curve$ through a map into the plane $\R^2$ by ``unraveling'' the surface.  Specifically, let 
\[ \eta_i : \curve \to (\I \times \R), \quad \eta_i(x) = (\pathimap{i}^{-1}\circ \pi_i(x), e^*_i( x) ). \]
Observe that for every $x,y\in \curve$,
\[\pi_i(x)=\pi_i(y) \quad\iff\quad e^*_1(\eta_i(x))=e^*_1(\eta_i(y)).\]

Let $\zeta_{i,\eps} : \curve \to (\I \times \R)$ such that $T'_{i,\eps} = \zeta_{i,\eps}(T_i)$ and $B'_{i,\eps} = \zeta_{i,\eps}(B_i)$ are polygonal paths with endpoints $a'_i = \eta_i(a_i)$ and $\tilde a'_i = \eta_i(\tilde a_i)$ that are sufficiently close to $\eta_i(T_i)$ and $\eta_i(B_i)$ that $G_{i,\eps} = T'_{i,\eps} \times_{e_1^*} B'_{i,\eps} \subset \eta_i(\tilde \Gamma_{i,\eps}^\circ)$. 
We may obtain such a $\zeta_{i,\eps}$, by choosing $T'_{i,\eps}$ and $B'_{i,\eps}$ to be polygonal $(\nicefrac{\eps}{2})$-approximations of $\eta_i(T_i)$ and $\eta_i(B_i)$, so that for all $(t,b) \in \zeta_{i,\eps}^{-1}(G_{i,\eps})$ with $y = e_1^*(\zeta_{i,\eps}(t)) = e_1^*(\zeta_{i,\eps}(b))$, we have
\begin{align*} 
|\lambda_i^{-1}(\pi_i(t)) - \lambda_i^{-1}(\pi_i(b))| &\leq | y - \lambda_i^{-1}(\pi_i(t))| + |y - \lambda_i^{-1}(\pi_i(b))| \\
&\leq \|\zeta_{i,\eps}(t) -\eta_i(t)\| + \|\zeta_{i,\eps}(b) -\eta_i(b)\|  \\
&< \eps.
\end{align*}
For simplicity, we choose $\zeta_{i,\eps}$ so that the first coordinate of the vertices of $T'_{i,\eps}$ and $B'_{i,\eps}$ are all distinct, except at the endpoints $a'_i$, $\tilde a'_i$. 

Since $a_i'$ has a single adjacent edge in $T'_{i,\eps}$ and a single adjacent edge in $B'_{i,\eps}$, the vertex $(a'_i,a'_i)$ of $G_{i,\eps}$ has graphical degree $1$.  Likewise $(\tilde a'_i,\tilde a'_i)$ has graphical degree $1$.  By Remark~\ref{lem:graph_deg}, every other vertex of $G_{i,\eps}$ has graphical degree $2$.  Hence, there is a path $\rho_{i,\eps} : \I \to G_{i,\eps}$ from $\rho_{i,\eps}(0) = (a',a')$ to $\rho_{i,\eps}(1) = (\tilde a',\tilde a')$.  

Let $\sigma_{i,\eps}: \I \to \tilde \Gamma_{i,\eps}^\circ$ be $\sigma_{i,\eps} = \eta_i^{-1} \circ \rho_{i,\eps}$, and define $\sigma': \I \to \tilde \Gamma_{i,\eps}^{-1\circ}$ by $\sigma'(t) = (y,x)$ where $(x,y) = \sigma(1-t)$, 
so that $\sigma$ traverses $\tilde \Gamma_{i,\eps}^\circ$ from $(a_i,a_i)$ to $(\tilde a_i, \tilde a_i)$ and 
$\sigma'$ traverses $\tilde \Gamma_{i,\eps}^{-1\circ}$ from $(\tilde a_i, \tilde a_i)$ to $(a_i,a_i)$.  Now we define $\psi_{i,\eps} : \mb{S}^1 \to \Gamma_{i,\eps}^\circ$ to be the closed curve that first follows $\sigma_{i,\eps}$ and then follows $\sigma'_{i,\eps}$; that is, 
\[
\psi_{i,\eps}(u) =
\begin{cases}
\sigma_{i,\eps} (\nicefrac{\theta_u}{\pi}) & \text{if }\theta_u \in [0,\pi] \\
\sigma'_{i,\eps} ((\nicefrac{\theta_u}{\pi})-1) & \text{if }\theta_u \in [\pi,2\pi]
\end{cases}
\]
where $\theta_u$ is the angle of the vector $u \in \cir$.

Observe that the first factor of $\psi_{i,\eps}$ traverses $T_i$ from
$a_i$ to $\tilde a_i$ and then traverses $B_i$ in the opposite direction, so
the first factor has topological degree $1$. Meanwhile, the second
factor of $\psi_{i,\eps}$ traverses $B_i$ from $a_i$ to $\tilde a_i$ and then
traverses $T_i$ in the opposite direction, so the second factor has
topological degree $-1$. Together this gives $\deg(\psi_{i,\eps}) = (1,-1)$.
\end{proof}

\begin{claim}\label{clm:epslast}
$\Gamma_{3,\eps} \circ\Gamma_{2,\eps} \circ\Gamma_{1,\eps}$ has a fixed point.
\end{claim}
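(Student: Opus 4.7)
The plan is to produce a curve inside $(\Gamma_{3,\eps} \circ \Gamma_{2,\eps} \circ \Gamma_{1,\eps})^\circ$ whose degree (as a curve in $\mb{T}^2$, after identifying $\curve$ with $\cir$ via $\curvemap$) has unequal coordinates, and then invoke Lemma \ref{lem:relfixedpoint} to extract a fixed point. The raw material is given by Claim \ref{clm:curve_i}: three curves $\psi_{i,\eps} : \cir \to \Gamma_{i,\eps}^\circ$ with $\deg(\psi_{i,\eps}) = (1,-1)$.

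First I would apply Lemma \ref{lem:compose_rel} to $\psi_{1,\eps}$ and $\psi_{2,\eps}$. Taking $(a,b_1)=(1,-1)$ from $\psi_{1,\eps}$ and $(b_2,c)=(1,-1)$ from $\psi_{2,\eps}$, both $b_1$ and $b_2$ are $\pm 1$ and hence odd, so the hypothesis of the lemma is satisfied. It then produces a curve $\phi_{12}:\cir\to (\Gamma_{2,\eps}\circ\Gamma_{1,\eps})^\circ$ with
\[
\deg(\phi_{12}) \;=\; \bigl( ka/b_1,\ kc/b_2 \bigr) \;=\; (-k,-k)
\]
for some odd integer $k$ (an odd common multiple of $\pm 1$).

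Next I would compose $\phi_{12}$ with $\psi_{3,\eps}$ via a second application of Lemma \ref{lem:compose_rel}. Now the relevant middle degrees are $-k$ (odd, since $k$ is odd) and $1$, so the oddness hypothesis holds again. The resulting curve $\Phi:\cir\to(\Gamma_{3,\eps}\circ\Gamma_{2,\eps}\circ\Gamma_{1,\eps})^\circ$ has
\[
\deg(\Phi) \;=\; \bigl( k'\cdot(-k)/(-k),\ k'\cdot(-1)/1 \bigr) \;=\; (k',-k')
\]
for some odd integer $k'$. Since $k'$ is odd it is nonzero, so the two coordinates of $\deg(\Phi)$ are distinct. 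Lemma \ref{lem:relfixedpoint} therefore supplies a point of $\underline{\smash{\Phi}}$ on the diagonal, which is automatically a fixed point of any relation containing $\underline{\smash{\Phi}}$, and in particular of $\Gamma_{3,\eps}\circ\Gamma_{2,\eps}\circ\Gamma_{1,\eps}$.

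The only thing to watch is the parity bookkeeping: each $\psi_{i,\eps}$ contributes a degree-$(1,-1)$ ``flip,'' and composing an odd number of such flips leaves a net sign difference between the two coordinates of the final degree, which is exactly what Lemma \ref{lem:relfixedpoint} requires. Composing only two such curves would give degree $(-k,-k)$ with equal coordinates, so this approach would fail in that case---matching the fact that two coordinate shadows being paths is geometrically possible, while three is not. Beyond this, the claim is a straightforward consequence of the heavy lifting already done in Claim \ref{clm:curve_i} and Lemma \ref{lem:compose_rel}, so I do not foresee a further obstacle.
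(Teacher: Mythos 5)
Your proposal is correct and follows essentially the same route as the paper: two applications of Lemma~\ref{lem:compose_rel} (first to $\psi_{1,\eps},\psi_{2,\eps}$, giving a curve of degree $(j,j)$ with $j$ odd, then to that curve and $\psi_{3,\eps}$, giving degree $(k',-k')$ with $k'$ odd), followed by Lemma~\ref{lem:relfixedpoint}. The sign bookkeeping matches the paper's proof up to the harmless relabeling $j=-k$.
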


\begin{proof}
Let $\psi_{i,\eps}$ be the curves as in Claim \ref{clm:curve_i} for $i=1,2,3$. By Lemma \ref{lem:compose_rel} applied to $\psi_{1,\eps}$ and $\psi_{2,\eps}$ there is a curve $\psi' : \cir \to (\Gamma_{2,\eps}\circ \Gamma_{1,\eps})^\circ$ such that $\deg(\psi') = (j,j)$ for some odd integer $j$, and by Lemma \ref{lem:compose_rel} applied to $\psi'$ and $\psi_{3,\eps}$ there is a curve $\psi_\eps : \cir \to (\Gamma_{3,\eps} \circ\Gamma_{2,\eps} \circ\Gamma_{1,\eps})^\circ$ such that $\deg(\psi_\eps) = (k,-k)$ for $k$ an odd multiple of $j$.
Therefore by Lemma~\ref{lem:relfixedpoint}, $\Gamma_{3,\eps} \circ\Gamma_{2,\eps} \circ\Gamma_{1,\eps}$ has a fixed point.
\end{proof}

\begin{claim}\label{clm:last}
$\Gamma_{3} \circ\Gamma_{2} \circ\Gamma_{1}$ has a fixed point.
\end{claim}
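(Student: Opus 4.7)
The plan is a straightforward compactness/limit argument exploiting the nesting property $\bigcap_{\eps>0} \Gamma_{i,\eps} = \Gamma_i$ and the compactness of $\curve$.

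First, apply Claim~\ref{clm:epslast} with $\eps = 1/n$ for each positive integer $n$ to obtain a fixed point $q_n \in \curve$ of the composition $\Gamma_{3,1/n} \circ \Gamma_{2,1/n} \circ \Gamma_{1,1/n}$. Unpacking the composition, there exist intermediate points $p_{1,n}, p_{2,n} \in \curve$ such that
\[ (q_n, p_{1,n}) \in \Gamma_{1,1/n}, \qquad (p_{1,n}, p_{2,n}) \in \Gamma_{2,1/n}, \qquad (p_{2,n}, q_n) \in \Gamma_{3,1/n}. \]

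Next, since $\curve$ is compact (being the image of $\cir$ under a continuous map), so is $\curve^3$. Pass to a subsequence along which all three sequences converge: $q_n \to q_0$, $p_{1,n} \to p_1$, and $p_{2,n} \to p_2$ for some points in $\curve$. I would then verify that each limit pair lies in the corresponding $\Gamma_i$. Fix $\eps > 0$. Since $1/n < \eps$ for all $n$ sufficiently large and the family is nested, $(q_n, p_{1,n}) \in \Gamma_{1,1/n} \subset \Gamma_{1,\eps}$ for those $n$. Because $\Gamma_{1,\eps}$ is closed (in fact compact), taking the limit yields $(q_0, p_1) \in \Gamma_{1,\eps}$. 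As $\eps > 0$ was arbitrary,
\[ (q_0, p_1) \in \bigcap_{\eps > 0} \Gamma_{1,\eps} = \Gamma_1, \]
and the identical argument shows $(p_1,p_2) \in \Gamma_2$ and $(p_2, q_0) \in \Gamma_3$. Chaining these memberships gives $(q_0, q_0) \in \Gamma_3 \circ \Gamma_2 \circ \Gamma_1$, so $q_0$ is the desired fixed point.

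There is no real obstacle here beyond bookkeeping; the only point requiring care is noting that the compactness of $\Gamma_{i,\eps}$ (not just of $\curve$) is what lets the limit of a sequence in $\Gamma_{i,1/n}$ be captured in $\Gamma_{i,\eps}$ for every $\eps$, after which the stated nesting identity closes the argument. This claim then combines with Lemma~\ref{lem:endpoints} to finish the proof of Theorem~\ref{thm:curve}: the fixed point $q_0$ satisfies $\pi_i(q_0) = \pi_i(q_0)$ via the relations $\Gamma_i$, and traversing the resulting chain of equal coordinates forces $q_0$ to project to an endpoint of each path $\pathi{i}$, contradicting Lemma~\ref{lem:endpoints}.
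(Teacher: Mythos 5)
Your argument is correct and follows essentially the same route as the paper: take fixed points of $\Gamma_{3,\eps}\circ\Gamma_{2,\eps}\circ\Gamma_{1,\eps}$ along $\eps\to 0$, pass to a convergent subsequence by compactness of $\curve$, and use the nestedness and compactness of the $\Gamma_{i,\eps}$ together with $\bigcap_{\eps>0}\Gamma_{i,\eps}=\Gamma_i$ to conclude that the limit pairs lie in the $\Gamma_i$. The only difference is cosmetic (indexing by $1/n$ rather than a monotone sequence $\eps_k$), so there is nothing further to add.
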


\begin{proof}
Let $q_{0,k}$ be a fixed point of $\Gamma_{3,\eps_k} \circ\Gamma_{2,\eps_k} \circ\Gamma_{1,\eps_k}$ for $\eps_k \to 0$ monotonically and $(q_{0,k},q_{1,k}) \in \Gamma_{1,\eps_k}$ and $(q_{1,k},q_{2,k}) \in \Gamma_{2,\eps_k}$ and $(q_{2,k},q_{0,k}) \in \Gamma_{3,\eps_k}$.  We may assume $q_{i,k} \to q_i$, since $\curve^2$ is compact, otherwise restrict to a convergent subsequence for each $i \in \{0,1,2\}$. 
Since the $\Gamma_{1,\eps}$ are nested, $(q_{0,j},q_{1,j}) \in \Gamma_{1,\eps_k}$ for $j \geq k$, and since $\Gamma_{1,\eps_k}$ is compact $(q_{0},q_{1}) \in \Gamma_{1,\eps_k}$.  Hence $(q_{0},q_{1}) \in \cap_{\eps_k} \Gamma_{1,\eps_k} = \Gamma_1$.  Similarly $(q_{1},q_{2}) \in \Gamma_{2}$ and $(q_{2},q_{0}) \in \Gamma_{3}$.
Thus, $q_0$ is a fixed point of $\Gamma_{3} \circ\Gamma_{2} \circ\Gamma_{1}$.
\end{proof}

Since $\Gamma_3\circ\Gamma_2\circ\Gamma_1$ has a fixed point, we have
a contradiction in exactly the same way as in the last two paragraphs
of the proof of Theorem \ref{prop:2-to-1} in Section \ref{sec:intuition}. 
Namely, the vectors $q_1-q_0$, $q_2-q_1$, $q_0-q_2$ are orthogonal to each other and sum to $0$, so each vector must be $0$.
This completes the proof of Theorem \ref{thm:curve}.

\section{Conclusion}

\subsection{Open Questions.}
The above questions can be generalized to higher dimensions in other
directions as well. 
\begin{enumerate}
\item\label{q1} 
What is the maximum number of coordinate shadows of a $k$-sphere embedded in $d$-space that can be contractible? 
\item\label{q2} 
What is the maximum number of coordinate shadows of a $k$-sphere embedded in $d$-space that can be embeddings of a $k$-ball? 
\item\label{q3} 
What is the maximum number of coordinate shadows of a $k$-ball embedded in $d$-space that can be embeddings of a $k$-sphere? 
\end{enumerate}

For $d = k+2$ the answer to question~\ref{q1} is $d$ by an inductive construction,  
and only special cases of Questions \ref{q2} and \ref{q3} for $k=1$ and $d=3$ were known
\cite{BDCD+2015}. 

\subsection{Difficulties of Generalization}

It is also worth noting a difficulty in extending to the case 
where $\cir$ is replaced with a higher dimensional sphere. 
If $\curvemap:\Ss^n\to\R^{d}$ is an embedding, 
then the preimage of $\partial \pi_i(\curve)$ might not separate $\Ss^n$ into multiple component.
Therefore, there does not seem to be a natural way to ``flip'' the sphere.

\section*{Acknowledgments}

This research was funded by CONACYT project 166306 and PAPIIT project IN112614. M. G. Dobbins was supported by the National Research Foundation of Korea NRF grant 2011-0030044, SRC-GAIA. H. Kim was supported by the Deutsche Forschungsgemeinschaft within the research training group `Methods for Discrete Structures' (GRK 1408). We are also thankful to Centro de Innovación Matemática (CINNMA) for all the support provided during this research.

\bibliographystyle{plain}
\bibliography{shadow}

\end{document}